\title[Aschbacher--O'Nan--Scott theorem for linear groups]{An Aschbacher--O'Nan--Scott theorem for countable linear groups}
\author{Tsachik Gelander and Yair Glasner}
\thanks{T.G. acknowledges support of the European Research Council (ERC)/ grant agreement 203418 and the ISF grant 1345/07. Y.G. acknowledges support of the ISF grant 441/11. }
\email{yairgl@math.bgu.ac.il}
\email{gelander@math.huji.ac.il}
\newtheorem*{theorem}{Theorem}
\newtheorem*{lemma}{Lemma}
 \theoremstyle{definition}
\newtheorem*{definition}{Definition}
\newtheorem*{remark}{Remark}
\newtheorem*{example}{Example}
\newcommand{\N}{{\mathbf{N}}}
\newcommand{\Z}{{\mathbf{Z}}}
\newcommand{\C}{{\mathbf{C}}}
\newcommand{\Q}{{\mathbf{Q}}}
\newcommand{\F}{{\mathbf{F}}}
\newcommand{\G}{{\mathbb{G}}}
\renewcommand{\P}{{\mathbf{P}}}
\newcommand{\ov}{{\overline{v}}}
\newcommand{\oH}{{\overline{H}}}
\newcommand{\Ac}{{\mathcal{A}}}
\newcommand{\Rc}{{\mathcal{R}}}
\newcommand{\action}{\curvearrowright}
\newcommand{\arrow}{\rightarrow}
\newcommand{\Aut}{{\operatorname{Aut}}}
\newcommand{\Core}{{\operatorname{Core}}}
\newcommand{\PSL}{{\operatorname{PSL}}}
\newcommand{\SL}{{\operatorname{SL}}}
\newcommand{\GL}{{\operatorname{GL}}}
\newcommand{\PGL}{{\operatorname{PGL}}}
\newcommand{\Inn}{\mathrm{Inn}}
\newcommand{\trivgp}{\langle e \rangle}
\newcommand{\defeq}{\stackrel{\operatorname{def}}{=}}
\begin{document}
\bibliographystyle{alpha}
\begin{abstract}
The purpose of this note is to extend the classical Aschbacher--O'Nan--Scott theorem for finite groups to the class of countable linear groups.
This relies on the analysis of primitive actions carried out in \cite{GG:Primitive}. Unlike the situation for finite groups, we show here that the number of primitive actions depends on the type: linear groups of almost simple type admit infinitely (and in fact unaccountably) many primitive actions, while affine and diagonal groups admit only one. The abundance of primitive permutation representations is particularly interesting for rigid groups such as simple and arithmetic ones.

\end{abstract}

\maketitle

\section{Introduction}

A group action $\Gamma \action \Omega$ is called {\it primitive} if there is no $\Gamma$-invariant equivalence relation on the set $\Omega$. Equivalently, primitive actions are transitive, of the form $\Gamma \action \Gamma/ \Delta$, where $\Delta$ is a maximal subgroup. One says that a group $\Gamma$ is a {\it primitive group} if it admits a faithful primitive action on a set. Whenever we use the word {\it{countable}} in this note we mean infinite and countable. 


The classical Aschbacher--O'Nan--Scott theorem (hereafter AOS theorem) describes the structure of finite primitive groups. In what follows we will always refer to the version of this theorem as it appears in \cite[Section 4.8, page 137]{DM:Permutation_Groups} where the finite primitive groups are sorted into five distinct categories; which we refer to as {\it{AOS categories}}. The classification there strongly depends on the {\it{socle}} of the group which is just the product of its minimal normal subgroups. 

When analyzing countable primitive linear groups one quickly finds infinite counterparts to many of the AOS categories. We describe infinite linear analogs for primitive groups of affine and Diagonal types in Sections \ref{sec:affine} and \ref{sec:diagonal} respectively.  It is not difficult to come up with analogues of the other AOS categories. For example, for any field $F$, the almost simple group $\PSL_2(F)$ acts $3$-transitively on the corresponding projective line $\P^1 F$. However, when attempting to classify primitive groups one soon encounters actions that do not resemble any of the finite AOS categories. For example the free group $F_2$ has a trivial socle (as it has no minimal normal subgroups), but it was shown by McDonough \cite{MD_free_HT} that this group admits a highly transitive action, i.e. an action that is $k$-transitive for every $k$. 

In \cite{GG:Primitive} we find the right generalization of the concept of almost simple groups, in the setting of countable linear groups. Instead of considering simplicity of the group itself we say that a group is of {\it{almost simple type}} if it admits a faithful linear representation whose Zariski closure is simple or close to being simple (see precise definition in Section \ref{sec:almost_simple}). It is shown in \cite[Theorem 1.9]{GG:Primitive} that any countable primitive nontorsion linear group falls into exactly one of the three categories: primitive of {\it{affine type}}, primitive of {\it{diagonal type}} or primitive of {\it{almost simple type}}. While primitive groups of affine type directly generalize the corresponding finite AOS category, primitive groups of diagonal type actually generalize only the special situation where $H$ is a product of two minimal normal subgroups. The latter category, primitive groups of almost simple type, contains all other primitive groups. It encompasses simple groups like $\PSL_2(F)$ together with many groups with trivial socle such as free groups and arithmetic groups (such as $\PSL_n(\Z), \ \ n \ge 2$).  A fact that attracted some criticism after the publication of \cite{GG:Primitive} was that the category of infinite groups of almost simple type contains also groups that seem to be direct infinite generalizations of groups of product or diagonal type, such as for example $T^m \rtimes S_m$ where $T$ is any infinite simple linear group. 

Our main result - Theorem \ref{thm:main} - reinforces the analogy between our classification of countable linear primitive groups with the finite Aschbacher-O'Nan-Scott theorem. We show that, while the primitive action is uniquely determined by the algebraic structure of the group for countable linear groups of affine and diagonal types; groups of almost simple type admit uncountably many nonisomorphic faithful primitive actions. For example the group $\PSL_2(\Q)$ admits many faithful primitive actions in addition to its well known 3-transitive action on the projective line $\P^1 \Q$. A similar situation holds for groups of almost simple type, that might be constructed in the same manner as groups of product or diagonal AOS types. 

This paper, as well as our previous work \cite{GG:Primitive}, were inspired by the beautiful paper of Margulis and Soifer \cite{MS:Maximal}.

\section{Types of primitive actions}

 \subsection{Primitive groups of affine type} 
 \label{sec:affine}
\begin{definition} \label{def:affine} Let $M$ be a countable vector space over a prime field, namely either $M = \F_p^{\aleph_0}$ or $M = \Q^{n}$ with $n \in \N \cup \{\infty\}$. Let $\Delta \leq \GL (M)$ be such that there are no nontrivial $\Delta$-invariant subgroups of  $M$. 

The action of $M$ on itself by left translations combined with the action of $\Delta$ on $M$ by conjugation yields an action of the semidirect product $\Gamma := \Delta \ltimes M \action M$. We refer to this as the {\it affine} action of $\Gamma$ on $M$. It follows from the condition on $\Delta$ that the affine action is primitive. We say in this case that the permutation group $\Gamma \action M$ is {\it primitive of affine type}.
\end{definition}

\begin{example} \label{ex:affine}
The natural two transitive action $\Q^{*} \ltimes \Q \action \Q$ is primitive of affine type. More generally one can consider the group $GL_n(F) \ltimes F^n$ with its affine action on $F^n$, for any countable field $F$. 
\end{example}

\subsection{Primitive groups of diagonal type}
\label{sec:diagonal}
\begin{definition} \label{def:diagonal}
Let $M$ be a nonabelian characteristically simple group, and $\Delta \leq \Aut(M)$. Assume that $\Inn(M) \leq \Delta$ and that there are no nontrivial $\Delta$-invariant subgroups of $M$. Just as in the affine case our condition on $\Delta$ ensures that the affine action $\Gamma = \Delta \ltimes M \action M$ is primitive. We then say that the permutation group $\Gamma$ is {\it primitive of diagonal type}.
\end{definition}

\begin{remark} \label{rem:diagonal} A group $\Gamma$ of diagonal type as above contains another normal subgroup which is isomorphic to  and commutes with $M$. This group is
$$M' \defeq \{\iota(m^{-1}) m \ | \ m \in M \},$$ 
where $\iota: M \arrow \Inn(M) < \Delta$ is the natural injection. Thus the action of $M  \times M'$ on $M$ is given by $(m,m')\cdot x = m x (m')^{-1}$, so that this action can be identified with the action of $M \times M'$ on the cosets of the diagonal subgroup $\{(m,m) \ | \ m \in M\}$. This is where the terminology {\it{ diagonal groups}} comes from. 
\end{remark}
\begin{example} \label{ex:diagonal}
The direct power $M = S^n$ of a simple group is always characteristically simple. While in the finite case every characteristically simple group is of this form, in the infinite case there are more examples. For instance $M = \PSL_n(\F[x])$ is characteristically simple by \cite{Wilson:CS} and hence $\Aut(M) \ltimes M$ is primitive of diagonal type.  
\end{example}

\subsection{Groups with an almost simple Zariski closure}
\label{sec:almost_simple}
\begin{definition} \label{def:AS}
We say that a linear group $\Gamma$ is {\it primitive of almost simple type} if there exists a faithful linear representation $\rho: \Gamma \arrow \GL_n(K)$ over an algebraically closed field $K$ for which the identity connected component of the Zariski closure $$G \defeq \left(\overline{\rho(\Gamma)}^{Z}\right)^{0} = H \times H \times \ldots \times H$$ is a product of isomorphic, simple center-free algebraic groups and the action of $\Gamma$ by conjugation is transitive on these simple factors. 
\end{definition}

\begin{example} \label{ex:AS}
$\SL_2(\C)$ contains a Zariski dense copy of every countable free group. More generally, by the Borel density theorem, every lattice in a connected noncompact simple Lie group has a simple Zariski closure. 
\end{example}
\subsection{The classification of countable primitive linear groups, from \cite{GG:Primitive} }\label{thm:GG}
Let $\Gamma$ be a countable linear group. If the ground field has positive characteristic, assume further that $\Gamma$ is not torsion (i.e. not all elements of $\Gamma$ have finite order). Then $\Gamma$ admits a faithful primitive action on a countable set if and only if it falls into one of the following, mutually exclusive, categories:
\begin{enumerate}
\item $\Gamma$ is primitive of affine type,
\item $\Gamma$ is primitive of diagonal type,
\item $\Gamma$ is primitive of almost simple type. 
\end{enumerate}


\section{The main theorem}
\label{sec:main_thm}
\begin{definition}
An action $\Gamma \action \Omega$ of a group on a set is called {\it{quasiprimitive}} if every normal subgroup $N \lhd \Gamma$ acts either trivially or transitively on $\Omega$. 
\end{definition}
Every primitive group action is quasi-primitive because the orbits of a normal subgroup are equivalence classes for a $\Gamma$-invariant equivalence relation. 
\begin{theorem} \label{thm:main}
Let $\Gamma$ be a countable primitive linear group as in Theorem \ref{thm:GG}, then the following dichotomy holds:
\begin{enumerate} 
\item If {\bf{$\bf{\Gamma}$ is primitive of affine or diagonal type}}, then it admits a unique (up to isomorphism of actions) faithful quasiprimitive action. Moreover this action is primitive.  
\item  If {\bf{$\bf{\Gamma}$ is primitive of almost simple type}}, then it admits uncountably many nonisomorphic faithful primitive actions on a countable set.  
\end{enumerate}
\end{theorem}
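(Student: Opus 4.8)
\medskip

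I would treat the two halves of the dichotomy by opposite means: \emph{rigidity} for (1) (any faithful quasiprimitive action is forced) and \emph{flexibility} for (2) (continuum many core-free maximal subgroups, built by ping-pong).

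\medskip

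\noindent\emph{Part (1).} Write $\Gamma=\Delta\ltimes M$ as in Definition \ref{def:affine} or \ref{def:diagonal}. In the affine case $M$ is the unique minimal normal subgroup, it is abelian, and $C_\Gamma(M)=M$; in the diagonal case $M$ is a minimal normal subgroup and, by Remark \ref{rem:diagonal}, $M':=C_\Gamma(M)=\{\iota(m^{-1})m\}$ is a second one, commuting with $M$. Let $\Gamma\action\Omega$ be a faithful quasiprimitive action. Faithfulness keeps every nontrivial normal subgroup out of the kernel, so quasiprimitivity makes each of them transitive; in particular $M$ (and, in the diagonal case, $M'$) acts transitively. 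Now I use two elementary facts: an abelian group acting transitively and faithfully acts regularly; and two commuting transitive subgroups of $\Sym(\Omega)$ each act regularly (if $a$ fixes a point and the commuting $B\ni b$ is transitive, $a(b\omega_0)=b(a\omega_0)=b\omega_0$). Hence $M$ acts regularly, and we identify $\Omega$ with the set underlying $M$ so that $M$ acts by left translations (and, in the diagonal case, $M'$, being the centralizer of this action, by right translations). The base-point stabilizer $D$ is then a complement to $M$ — and, in the diagonal case, also to $M'$ (as $M'$ is transitive) — and a coset computation shows $D$ acts on $\Omega=M$ by conjugation. In the affine case conjugation by $D$ factors through $\Gamma/M\cong\Delta$ since $M$ is abelian, so $D$ acts through the prescribed linear action. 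In the diagonal case $D$ is automatically core-free: a $\Gamma$-normal $N\le D$ meets $M$ trivially, so $[N,M]=1$, so $N\le M'$, so $N\le D\cap M'=1$. In either case the permutation group $\Gamma\action\Omega$ is, up to an automorphism of $\Gamma$ trivial on $M$ (which absorbs the cocycle describing $D$), exactly the affine resp.\ diagonal action of Definition \ref{def:affine} resp.\ \ref{def:diagonal}; thus all faithful quasiprimitive actions of $\Gamma$ form one isomorphism class, and that action is primitive by the hypothesis on $\Delta$. This gives (1).

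\medskip

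\noindent\emph{Part (2).} The input is \cite{GG:Primitive}: for $\Gamma$ of almost simple type, $\rho^{-1}(G)$ is a finite index subgroup of $\Gamma$ with Zariski dense image in $G=H^k$, so $\Gamma$ is a countable linear group that is far from virtually solvable (it contains nonabelian free subgroups acting proximally on a flag variety), and \cite{GG:Primitive} constructs a maximal subgroup $\Delta<\Gamma$ with $\Core_\Gamma(\Delta)=1$, i.e.\ one faithful primitive action. I would revisit that construction: it runs recursively along an enumeration $\Gamma=\{\gamma_n\}$, at each step appending ping-pong data either to force $\langle\Delta_n,\gamma_n\rangle=\Gamma$ (toward maximality) or to push some conjugate of a fixed nontrivial element out of $\Delta$ (toward trivial core). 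The observation to add is that at infinitely many steps there is a genuine binary choice of which ping-pong element to use, and distinct choices remain distinct forever. Carrying out the choices according to an arbitrary $t\in\{0,1\}^{\N}$ yields a family $\{\Delta_t\}$ of core-free maximal subgroups, pairwise distinct because the choices are recorded (by whether prescribed elements lie in $\Delta_t$). Since $\Gamma$ is countable, each conjugacy class of subgroups is countable, so the $2^{\aleph_0}$ subgroups $\Delta_t$ lie in $2^{\aleph_0}$ conjugacy classes, and the actions $\Gamma\action\Gamma/\Delta_t$ are $2^{\aleph_0}$ pairwise non-isomorphic faithful primitive actions on countable sets. (If $\Gamma$ is itself simple, e.g.\ $\PSL_2(\Q)$, core-freeness is automatic and one needs only the continuum of maximal subgroups.)

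\medskip

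\noindent\emph{Main obstacle.} The heart is the construction in (2): fitting into a single recursion the two antagonistic demands — maximality wants $\Delta$ large, triviality of the core wants $\Delta$ to miss every nontrivial normal subgroup — while retaining $2^{\aleph_0}$ independent choices and keeping the stabilizers pairwise inequivalent. This rests on the Margulis--Soifer ping-pong technique as refined in \cite{GG:Primitive}; the new content is that its freedom is genuinely of continuum cardinality. A secondary point needing care is the reduction in (2): both passing to a finite index subgroup and quotienting out the permutation of the factors interact with faithfulness, and one must check these operations carry a continuum of primitive actions back up to $\Gamma$.
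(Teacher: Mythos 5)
Your Part (2) is where the genuine gap lies, and it is precisely at the point you flag as "the observation to add." The recursion of \cite{GG:Primitive} (Margulis--Soifer style ping-pong) does \emph{not} output a maximal subgroup: it outputs a free prodense subgroup $\Delta=\langle\delta_1,\delta_2,\ldots\rangle$ meeting every double coset of $F=\langle x,y\rangle$, and maximality is only obtained afterwards by a Zorn's lemma completion (a subgroup containing $\Delta$ and maximal with respect to not containing $F$). So even if you thread $2^{\aleph_0}$ binary choices through the recursion and record them faithfully, you get $2^{\aleph_0}$ distinct prodense subgroups $\Delta_t$, not $2^{\aleph_0}$ distinct maximal subgroups: nothing in your argument prevents continuum many of the $\Delta_t$ from being absorbed into one and the same Zorn-maximal overgroup (or into countably many of them, or into conjugate ones). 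The inference "the choices are recorded by whether prescribed elements lie in $\Delta_t$, hence the maximal subgroups are pairwise distinct" is exactly the unproved step. In fact the paper explicitly warns against your stronger conclusion: Remark \ref{rem:cont_hyp} states that only "uncountably many" (i.e. strictly more than $\aleph_0$) actions are established, and that producing $2^{\aleph_0}$ nonisomorphic actions without the continuum hypothesis is left as an open question. Your proposal claims to settle that open question by an argument whose key step is asserted rather than proved.

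The paper's actual route for (2) is a diagonalization by contradiction, which your proposal does not contain. Assume the maximal subgroups of infinite index with trivial core form a countable list $M_1,M_2,\ldots$. Enrich the ping-pong tuple of \cite{GG:Primitive} with auxiliary very proximal elements $\eta_1,\eta_2,\ldots$ (the even steps of the induction). Using the elementary lemma that $\{\gamma l\gamma,\gamma^2 l\gamma\}\not\subset M$ when $l\notin M$, together with strong irreducibility and passage to powers of $\eta_i$ to shrink the attracting/repelling neighborhoods, one replaces each $\eta_i$ by $\theta_i\in\{\eta_i l_i\eta_i,\eta_i^2 l_i\eta_i\}$ with $\theta_i\notin M_i$ while keeping the whole collection $\{\delta_j,\theta_i\}$ a ping-pong tuple. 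The Zorn-maximal subgroup $M$ containing $\langle\delta_1,\ldots,\theta_1,\ldots\rangle$ and not containing $F$ is then maximal (by the double-coset property), prodense (hence core-free and of infinite index), and satisfies $\theta_i\in M\setminus M_i$ for every $i$ --- contradicting the completeness of the list. This yields uncountably many such subgroups, and nonconjugacy (hence nonisomorphism of the actions $\Gamma\action\Gamma/M$) comes for free since each conjugacy class is countable. Your Part (1), by contrast, is broadly fine in spirit, but note the paper does not reprove it: it is quoted from \cite[Proposition A.1(2)]{GG:Primitive}, so the only new content of the theorem's proof is the diagonalization you are missing.
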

\begin{remark} \label{rem:cont_hyp}
We establish the existence of uncountably many (i.e. $\gneqq \aleph_0$) nonisomorphic primitive actions. It is a natural question whether one can construct $2^{\aleph_0}$ nonisomorphic actions without appealing to the continuum hypothesis. 
\end{remark}
\begin{remark}
The faithfulness assumption is necessary in the theorem above. Take for example the group $\SL_2(\Q) \ltimes \Q^2$. This is a typical group of affine type and hence admits a unique faithful primitive action. But it maps onto the group $\PSL_2(\Q)$ which is clearly of almost simple type and hence admits uncountably many nonisomorphic primitive actions. 
\end{remark}

\begin{proof}[Proof of the Theorem] 
We use the notations from \cite{BG:Dense_Free,BG:Topological_Tits,GG:Primitive}. In particular by ``the canonical" attracting point or repelling hyperplane of a proximal projective transformation $g\in \PGL_n(k)$, we mean these fixed point $\overline{v}_{g}$ and fixed hyperplane $\overline{H}_{g}$ obtained in \cite[Lemma 3.2]{BG:Topological_Tits}.

We shall start with few reductions. Let $\Gamma$ be a countable primitive linear group. If $\Gamma$ is linear over a field of characteristic $p > 0$ assume further that $\Gamma$ is not torsion. For $\Gamma$ of affine or diagonal type, it is proved in \cite[Proposition A.1(2)]{GG:Primitive} that the given primitive action of $\Gamma$ is the unique faithful quasiprimitive action of this group. Below we will assume that $\Gamma$ is of almost simple type, i.e. that it comes with a linear representation as in Section \ref{sec:almost_simple}  and prove that $\Gamma$ admits uncountably many nonisomorphic faithful primitive actions. 

Recall that a group action $\Gamma \curvearrowright \Gamma/\Delta$ is  faithful if and only if 
$$
 \Core_{\Gamma}(\Delta) := \cap_{\gamma \in \Gamma} \Delta ^{\gamma} = \trivgp.
$$ 
So in group theoretic terms we have to show that $\Gamma$ contains uncountably many nonconjugate maximal subgroups of infinite index and trivial core. In practice we need never worry about the conjugacy between maximal subgroups because, since $\Gamma$ is countable, there are at most countably many different maximal subgroups conjugate to any given one.

\medskip

Let $K$ be an arbitrary field and $\Gamma \le \GL_n(K)$ a countable group for which the connected component $\G^{0}$ of $\G = \overline{\Gamma}^{Z}$ is a power of simple $K$ algebraic group and the action of $\Gamma$ on $\G^{0}$ by conjugation is faithful and permutes the simple factors of $\G^{0}$ transitively. In \cite{GG:Primitive} we construct
\begin{itemize}
\item a complete valuation field $k$ (which is a local field in case $\Gamma$ happens to be finitely generated),
\item a strongly irreducible algebraic projective representation $$\rho : \G(k) \arrow \PGL_n(k),$$ defined over a local subfield $k' < k$, and
\item elements $\{x,y,\delta_1,\delta_2, \ldots, \eta_1,\eta_2,\ldots\} \subset \Gamma$.
\end{itemize}
Such that the following properties are satisfied
\begin{enumerate}
\item[(i)] $\{\delta_1,\delta_2, \ldots, \eta_1,\eta_2,\ldots\}$ are very proximal elements, forming a ping-pong tuple with respect to the action on $\P^{n-1}(k)$. Furthermore each of these elements satisfies the conditions of \cite[Lemma 3.2]{BG:Topological_Tits}. 
\item[(ii)] $\Delta \defeq \langle \delta_1,\delta_2, \ldots \rangle$ is {\it prodense} in $\Gamma$. By definition this means that $\Delta N = \Gamma,$ for every normal subgroup $\langle e \rangle \ne N \lhd \Gamma$. 
\item[(iii)] $\Delta$ has a nontrivial intersection with every double coset of $F \defeq \langle x,y \rangle$, $$\Delta \cap F \gamma F \ne \emptyset \ \ \forall \gamma \in \Gamma.$$
\end{enumerate}
Denote the attracting points and repelling hyperplanes associated with the very proximal element $\eta_i$ by $\ov_{\eta_i}, \ov_{\eta_i^{-1}},\oH_{\eta_i}, \oH_{\eta_i^{-1}}$, and the corresponding attracting and repelling neighborhoods by $\Ac(\eta_i)$, $\Ac(\eta_i^{-1})$, $\Rc(\eta_i)$, $\Rc(\eta_i^{-1})$.

\medskip

\begin{remark}
A reader willing to take the above construction from \cite{GG:Primitive} upon faith is not required to delve any further into the technical details of \cite{GG:Primitive} in order to understand the argument below. However in order to verify the validity of this construction we suggest the following ``road map" of \cite{GG:Primitive}:

For the construction of the elements $\{\delta_i\}$ in \cite{GG:Primitive} see Section 7.3, and for the construction of $x,y$ (referred to as $h_1,h_2$) see the final paragraph of Section 7 and the paragraph at the bottom of Page 1494 at Section 3.
The elements $\{\eta_i\}$ do not actually appear in \cite{GG:Primitive}. Nevertheless the inductive argument constructing the countably many ping-pong players $\{\delta_1,\delta_2,\ldots\}$ can equally well yield the additional elements $\{\eta_i\}$. Each element in its turn is required to satisfy some useful property as well as to play ping-pong with all the previous ones. We can vary the procedure slightly, dedicating the odd steps of the induction to this construction, while reserving the even steps for the construction of the elements $\eta_i$.
\end{remark}


We will make use of the following simple group theoretic lemma:

\begin{lemma} \label{lem:gp}
Let $\Gamma$ be a group, $M \lneqq \Gamma$ a proper subgroup, and assume that $\gamma, l \in \Gamma$ with $l \not \in M$. Then
$$\left \{\gamma l \gamma, \gamma^2 l \gamma \right\} \not \subset M ~~~~ {\text{ and }} ~~~~ \left \{\gamma l, \gamma^2 l \right\} \not \subset M$$
\end{lemma}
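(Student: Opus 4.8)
The plan is to prove the contrapositive of each implication, arguing that if $M$ contains $\gamma l \gamma$ and $\gamma^2 l \gamma$, or if it contains $\gamma l$ and $\gamma^2 l$, then $l \in M$ (or $M = \Gamma$), contradicting the hypothesis. The key observation is that in each pair, the two elements differ by a left multiple of $\gamma$: indeed $(\gamma^2 l \gamma)(\gamma l \gamma)^{-1} = \gamma^2 l \gamma \gamma^{-1} l^{-1} \gamma^{-1} = \gamma^2 l l^{-1} \gamma^{-1} \cdot (\text{rearranged})$, so I should be a little careful and instead observe $(\gamma^2 l\gamma)(\gamma l \gamma)^{-1} = \gamma^2 l \gamma (\gamma l \gamma)^{-1}$, which a direct computation shows equals $\gamma \cdot (\gamma l \gamma)(\gamma l \gamma)^{-1}\cdot$... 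Let me instead use left quotients: $(\gamma l \gamma)^{-1}(\gamma^2 l \gamma)$ does not simplify as nicely either, so the cleanest route is: from $\gamma l \gamma \in M$ and $\gamma^2 l \gamma \in M$ we get $(\gamma^2 l \gamma)(\gamma l\gamma)^{-1} \in M$; expanding, $(\gamma^2 l \gamma)(\gamma^{-1} l^{-1}\gamma^{-1}) = \gamma^2 l l^{-1} \gamma^{-1} = \gamma$ after cancelling $\gamma\gamma^{-1}$ in the middle — so $\gamma \in M$. Similarly, in the second pair, $(\gamma^2 l)(\gamma l)^{-1} = \gamma^2 l l^{-1}\gamma^{-1} = \gamma$, so again $\gamma \in M$.

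Once $\gamma \in M$ is established, the rest is immediate: from $\gamma \in M$ and (say) $\gamma l \gamma \in M$ we get $l = \gamma^{-1}(\gamma l \gamma)\gamma^{-1} \in M$, contradicting $l \notin M$; and in the second case, from $\gamma \in M$ and $\gamma l \in M$ we get $l = \gamma^{-1}(\gamma l) \in M$, again a contradiction. So in fact the hypothesis that $M$ is a \emph{proper} subgroup is used only to note that the conclusion $l \in M$ is genuinely contradictory; the core of the argument is purely the group-law cancellation above, valid in any group.

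I would present this as two short paragraphs, one for each of the two pairs, each following the same two-line pattern: first multiply the two putative members of $M$ to extract $\gamma$, then conjugate (resp. left-translate) to extract $l$. There is essentially no obstacle here — the only thing to watch is getting the order of multiplication right so that the middle factors cancel, i.e. using the \emph{right} quotient $g_2 g_1^{-1}$ rather than $g_1^{-1}g_2$ in the first pair. The lemma as stated does not even need $M$ proper for the computation; properness just packages the conclusion in the convenient ``not contained in'' form used later when $M$ will be a maximal subgroup and the elements $\gamma, l$ will be chosen from the prodense subgroup $\Delta$ and the free subgroup $F = \langle x, y\rangle$.
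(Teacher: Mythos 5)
Your proof is correct and follows essentially the same route as the paper: from $\gamma^2 l\gamma = \gamma\cdot(\gamma l\gamma)$ (resp.\ $\gamma^2 l = \gamma\cdot(\gamma l)$) one first extracts $\gamma \in M$ and then $l = \gamma^{-1}(\gamma l \gamma)\gamma^{-1} \in M$ (resp.\ $l = \gamma^{-1}(\gamma l) \in M$), contradicting $l \notin M$. The only difference is expository; the meandering about which quotient to take could simply be replaced by the observation that the second element of each pair is $\gamma$ times the first.
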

\begin{proof}
If both $\gamma l \gamma$ and $\gamma(\gamma l \gamma)$ belong to $M$ then also $\gamma \in M$ and hence $l = \gamma^{-1} (\gamma l \gamma) \gamma^{-1} \in M$, in contradiction to our assumption. The second statement's proof is identical.
\end{proof}

Assume, by way of contradiction that $\Gamma$ admits only countably or finitely many maximal subgroups of infinite index with trivial core. We enumerate all these subgroups 
$$M_1,M_2,M_3, \ldots$$
and conclude the theorem by constructing a new maximal subgroup $\Delta < M \lneqq \Gamma$, such that $M \ne M_i \ \ \forall i$. Note that as $\Delta$ is prodense so is every group containing it, and as such $M$ is of infinite index and has trivial core.

\subsubsection*{First step}
It is possible to choose elements $l_i  \in \Gamma \setminus M_i$ such that 
\begin{equation}
\label{eqn:pre_in}
l_i(\overline{v}_{\eta_i}) \not \in \overline{H}_{\eta_i} {\text{ and }} l_i(\overline{v}_{\eta_i^{-1}}) \not \in \overline{H}_{\eta_i^{-1}},
\end{equation}
Indeed let $l_i \in \Gamma \setminus M_i$ be any element. If Equation (\ref{eqn:pre_in}) does not hold choose an element $a \in \Gamma$ such that
$$ a l_i \overline{v}_{\eta_i} \not \in \overline{H}_{\eta_i} \quad 
a^2 l_i \overline{v}_{\eta_i} \not \in \overline{H}_{\eta_i} \quad
a l_i \overline{v}_{\eta_i^{-1}} \not \in \overline{H}_{\eta_i^{-1}} \quad 
a^2 l_i \overline{v}_{\eta_i^{-1}} \not \in \overline{H}_{\eta_i^{-1}}.$$
This is possible since $\Gamma$ acts strongly irreducibly on $\P^{1}(k)$. By the lemma at least one of the elements $al_i, a^2l_i$ does not belong to $M$. After replacing $l_i$ by that element Equation (\ref{eqn:pre_in}) holds. 

\subsubsection*{Second step}
We can require
\begin{equation}
\label{eqn:in}
l_i \Ac(\eta_i) \cap \Rc(\eta_i) =  l_i \Ac(\eta_i^{-1}) \cap \Rc(\eta_{i}^{-1}) = \emptyset.
\end{equation} 

Equation (\ref{eqn:in}) is achieved by replacing  $\eta_i$ by some positive proper power of it, thereby reducing the neighborhoods $\Ac(\eta_i)$, $\Rc(\eta_i)$, $\Ac(\eta_i^{-1})$, $\Rc(\eta_i^{-1})$. By \cite[Lemma 3.2]{BG:Topological_Tits} this dose not harm the properties (i,ii,iii) above.
Finally we appeal once more to the lemma, replacing $\eta_i$ by an element $\theta_i$ chosen from the set $\{\eta_i l_i \eta_i, \eta_i^2 l_i \eta_i\}$ such that $\theta_i \not \in M_i$. It is easy to see that these elements $\theta_i$ admit similar dynamics as the original $\eta_i$. For example:
\begin{eqnarray*}
\eta_i l_i \eta_i \left(\P^{n-1}(k) \setminus \Rc(\eta_i) \right) & \subset &
\eta_i l_i \left( \Ac(\eta_i) \right) \\
& \subset & \eta_i \left( \P^{n-1}(k) \setminus \Rc(\eta_i) \right) \subset \Ac(\eta_i)
\end{eqnarray*}

Now, since the attracting and repelling neighborhoods for $\theta_i$ are contained in the corresponding neighborhoods for $\eta_i$, the collection $\{\delta_1,\delta_2, \ldots, \theta_1, \theta_2 \ldots \}$ still forms a ping-pong tuple and is therefore independent. Let $\Sigma \defeq \langle \delta_1,\delta_2, \ldots, \theta_1, \theta_2 \ldots \rangle $ be the subgroup generated by these elements and let $\Sigma \le M \lneqq \Gamma$ be a subgroup of $\Gamma$, containing $\Sigma$ and maximal with respect to the property that it does not contain $F = \langle x,y \rangle$ as a subgroup. Such a group exists by a simple Zorn's lemma argument. Furthermore $M$ is a maximal subgroup of $\Gamma$, because any subgroup that strictly contains $M$ must contain $F$ together with a representative for every double coset $F \gamma F$, by Property (3) of $\Delta$. Since $\Delta$ is prodense so is the group $M > \Delta$ and hence $\Core_{\Gamma}M = \trivgp$. So $M$ is a maximal subgroup with trivial core. But for every $i \in \N$ we have $M \ne M_i$ because $\theta_i \in M \setminus M_i$. This is a contradiction to the fact that the collection $\{M_1,M_2,\ldots \}$ consists of all the maximal subgroups with trivial core inside $\Gamma$.

\end{proof}

\bibliography{../MyTexfiles/tex_utils/yair}
\end{document}